\theoremstyle{plain}
\newtheorem{mainthm}{Theorem}
\newtheorem{thm}{Theorem}[section]
\newtheorem{lem}[thm]{Lemma}
\newtheorem{question}{Question}
\newtheorem{claim}{Claim}
\theoremstyle{definition}
\newtheorem{definition}[thm]{Definition}
\theoremstyle{remark}
\newtheorem{rem}[thm]{Remark}
\setlist[enumerate,1]{label=(\arabic*)}
\setlist[enumerate,2]{label=(\alph*)}
\numberwithin{equation}{section}
\begin{document}

\title[Historic behavior in homoclinic classes]{Historic behavior in non-hyperbolic homoclinic classes}



\author[Barrientos et al.]{Pablo G. Barrientos}
\address[Barrientos]{Institute of Mathematics and Statistics, University Federal Fluminense-UFF, Gragoata Campus, Rua Prof.\ Marcos Waldemar de Freitas Reis, S/n-Sao Domingos, Niteroi - RJ, 24210-201, Brazil}
\email{pgbarrientos@id.uff.br}

\author[]{Shin Kiriki}
\address[Kiriki]{Department of Mathematics, Tokai University, 4-1-1 Kitakaname, Hiratuka, Kanagawa, 259-1292, Japan}
\email{kiriki@tokai-u.jp}

\author[]{Yushi Nakano}
\address[Nakano]{Department of Mathematics, Tokai University, 4-1-1 Kitakaname, Hiratuka, Kanagawa, 259-1292, Japan}
\email{yushi.nakano@tsc.u-tokai.ac.jp}

\author[]{Artem Raibekas}
\address[Raibekas]{
Institute of Mathematics and Statistics, University Federal Fluminense-UFF, Gragoata Campus, Rua Prof.\ Marcos Waldemar de Freitas Reis, S/n-Sao Domingos, Niteroi - RJ, 24210-201, Brazil}
\email{artem@mat.uff.br}

\author[]{Teruhiko Soma}
\address[Soma]{Department of Mathematical Sciences,
Tokyo Metropolitan University,
Minami-Ohsawa 1-1, Hachioji, Tokyo 192-0397, Japan}
\email{tsoma@tmu.ac.jp}

\thanks{
Kiriki and Nakano are grateful to Yongluo Cao with his colleagues
and students in Soochow University
for their hospitality and support.
This work was partially supported by JSPS KAKENHI Grants Nos.~17K05283, 18K03376.}

\subjclass[2010]{Primary: 37C05, 37C20, 37C25, 37C29, 37C70.}

\date{}

\dedicatory{Dedicated to Hiroshi Kokubu for his $60^{th}$ birthday}

\commby{}

\begin{abstract}
We show that $C^1$-generically for diffeomorphisms of manifolds of
dimension $d\geq3$, a homoclinic class containing saddles of
different indices has a residual subset where the orbit of any
point has historic behavior.
\end{abstract}
\maketitle

\section{Introduction}
The purpose of this paper is to explore historic behavior in  non-hyperbolic invariant sets of  higher-dimensional diffeomorphisms. Let us begin
by explaining what historic behavior is.
For a given continuous map  $f$ on a manifold $M$, $x\in M$ and
a continuous function $\varphi:M\to \mathbb{R}$,
we consider the sequence
of  partial averages
\[
   \frac{1}{n}\sum_{i=0}^{n-1} \varphi (f^i(x))
\]
 along the forward orbit  $O^{+}(x, f)=\{x, f(x),f^2(x),\dots \}$.
If the limit of the above averages exists
as $n\to \infty$,
it is called the \emph{Birkhoff average} associated with $(x,\varphi)$.
Otherwise, such a phenomenon, which was observed in Bowen's example \cite{G92, T94},
is called historic by Ruelle \cite{R01}.
To be more precise,  we say that $O^{+}(x, f)$  has \emph{historic behavior}
if there is a $\varphi\in C^{0}(M, \mathbb{R})$ such that the
Birkhoff average associated with $(x,\varphi)$ does not exist, see \cite{T08}.

The motivation behind the study of historic behavior is the following.
The Birkhoff ergodic theorem implies that
if $\mu$ is a probability measure on $M$ which is \emph{$f$-invariant}, that is,
$\mu(B)=\mu(f^{-1}(B))$ for every measurable set $B\subset M$,
the Birkhoff average
exists for $\mu$-almost every point $x\in M$.
Hence the set of initial points whose forward orbits have historic behavior
is of $\mu$-measure zero. 
However since the Lebesgue 
measure is in general not $f$-invariant, 
the set is not always of Lebesgue measure zero. 
Furthermore, even if the set has  Lebesgue measure zero, 
it is possible for the set to be topologically large. 

In the case of uniformly hyperbolic systems, Takens \cite{T08} proves that
the existence of a Markov partition
induces historic behavior in a residual set.
However, his method cannot be applied to a
non-hyperbolic invariant set, where the Markov partition is not guaranteed.
One of causes of the lack of hyperbolicity is existence of a homoclinic tangency, i.e., a non-transverse intersection between stable and unstable manifolds.
It is shown in \cite{KS17} that 
there exist some persistent classes of surface diffeomorphisms 
with homoclinic tangencies and contracting non-trivial
wandering domains in which the orbit of every point has historic behavior, 
see below for further information.  
This result can be 
extended to certain 3-dimensional flows with heteroclinic cycles of periodic solutions \cite{LR17}.
Meanwhile, for the geometric Lorenz attractor
it is known that
there is a residual subset in the trapping region with historic behavior
\cite{KLS16}.
\smallskip

In this paper we consider invariant sets called \emph{homoclinic classes},
which form the basic pieces of a dynamical system.
Properly speaking, for a diffeomorphism with a saddle periodic point $P$, the \emph{homoclinic class} $H(P)$ is defined as
the closure of transverse intersections
of the unstable and stable manifolds of the orbit of $P$.
Note that
every maximal invariant transitive hyperbolic set is a homoclinic class but the converse is not always true.
In fact,
a homoclinic class may be non-hyperbolic
if it contains a homoclinic tangency
or as another possibility has periodic points with different indices (the dimension of the
stable manifold). This second situation is related to the notion of a heterodimensional cycle (see in Section \ref{S4} or \cite[\S 6]{BDV05}), an important topic
in non-hyperbolic dynamical systems, and will be the one studied in this article.

From now on, let $M$ be a compact connected manifold without boundary of dimension $d\geq 3$.
Let us introduce the following terminology. A homoclinic class $H(P)$ 
of a diffeomorphism $f$
has
\emph{residual historic behavior} if there is a residual subset
$R$ of $H(P)$ and there exists a $\varphi\in C^{0}(M, \mathbb{R})$ such
that the Birkhoff average associated with $(x,\varphi)$ does not exist for every $x\in R$.
The main result of the paper is the following:

\begin{mainthm} \label{thm-main} For any $C^1$-generic diffeomorphism of $M$,
 a homoclinic class containing saddles of
different indices
has residual historic behavior.
\end{mainthm}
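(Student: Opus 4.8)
The plan is to combine two $C^1$-generic mechanisms: the coexistence of saddles of different indices inside $H(P)$, and the generic control over how periodic orbits and invariant measures are distributed in the class. Let me think about how I would actually build residual historic behavior here.

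First, the key structural input. If $H(P)$ contains saddles of different indices, then after a $C^1$-small perturbation (staying in the generic set) there is a heterodimensional cycle associated to the class, and by Bonatti–Díaz–type arguments the class contains saddles $Q_1$ and $Q_2$ of indices $i$ and $i+1$ that are homoclinically related to $P$ through the cycle. The payoff I want from genericity is twofold. On one hand, for $C^1$-generic $f$ the homoclinic class $H(P)$ equals the chain recurrence class, is transitive, and — crucially — the periodic orbits are dense in $H(P)$ and the set of their invariant measures is dense (in the weak-$*$ topology) in the space of $f$-invariant measures supported on $H(P)$. On the other hand, the existence of two saddles of different indities gives, generically, (at least) two ergodic measures $\mu_1$ and $\mu_2$ supported on $H(P)$ that are genuinely distinct: they can be taken to be the measures on $Q_1$ and $Q_2$, or measures approximating them.

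Now I would choose the test function. Because $\mu_1\neq\mu_2$ there is a continuous $\varphi\colon M\to\mathbb R$ with $\int\varphi\,d\mu_1\neq\int\varphi\,d\mu_2$; fix such a $\varphi$ once and for all. The target set is
\[
R=\Bigl\{x\in H(P): \liminf_{n\to\infty}\frac1n\sum_{i=0}^{n-1}\varphi(f^i(x))<\limsup_{n\to\infty}\frac1n\sum_{i=0}^{n-1}\varphi(f^i(x))\Bigr\}.
\]
Every $x\in R$ has historic behavior for this single $\varphi$, which is exactly what the definition of residual historic behavior requires, so it suffices to prove that $R$ is residual in $H(P)$. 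For each pair of rationals $a<b$ I would set
\[
G_{a,b}=\Bigl\{x\in H(P):\liminf_n\tfrac1n\textstyle\sum_{i<n}\varphi(f^i(x))<a<b<\limsup_n\tfrac1n\textstyle\sum_{i<n}\varphi(f^i(x))\Bigr\},
\]
choosing $a,b$ between $\int\varphi\,d\mu_1$ and $\int\varphi\,d\mu_2$. Each $G_{a,b}$ is a $G_\delta$ set (it is a countable intersection over $N$ of the open conditions "there exists $n\ge N$ with the average below $a$" and similarly above $b$), and $R\supseteq G_{a,b}$. So the whole problem reduces to showing density of the set of points whose ergodic averages oscillate across the gap $[a,b]$.

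The heart of the argument, and the step I expect to be the main obstacle, is this density/shadowing claim: every nonempty open set $U\cap H(P)$ must contain a point whose Birkhoff averages dip below $a$ infinitely often and rise above $b$ infinitely often. Here I would exploit transitivity together with the two distinct measures $\mu_1,\mu_2$ supported on the class. Using that periodic-orbit measures are dense, one finds periodic orbits $P_1,P_2\subset H(P)$ with $\frac1{\mathrm{per}}\sum\varphi\approx\int\varphi\,d\mu_1<a$ and $>b$ respectively; these orbits are homoclinically related inside $H(P)$, so by a connecting/shadowing construction one can build an orbit that alternately spends extremely long stretches near $P_1$ (driving the average below $a$) and then extremely long stretches near $P_2$ (driving it above $b$), with the transition times negligible. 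Making this rigorous without a hyperbolic Markov partition is the delicate point: one cannot invoke Takens' Markov-partition method directly, so instead I would use the $C^1$-generic connecting lemma (or the topological consequences of chain-transitivity and the specification-like behavior coming from homoclinic relations) to produce such alternating pseudo-orbits and then genuine orbits through any prescribed open set. Combining this density with the $G_\delta$ structure and Baire's theorem yields that $R$ is residual in $H(P)$, completing the proof.
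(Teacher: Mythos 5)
Your global frame---fix one continuous $\varphi$, write the historic set as a countable intersection of open conditions (a $G_\delta$), and reduce everything to density of oscillating points in $H(P)$---is exactly the skeleton of the paper's proof (its sets $\Delta(x,\varepsilon,N)$ and $R=\bigcap R_N$ play the role of your $G_{a,b}$). But the analytic heart, the density step, is not established in your sketch, and the tools you invoke cannot do it. First, the $C^1$-connecting lemma is a \emph{perturbation} statement: it modifies $f$, so it cannot produce genuine orbits of the fixed generic diffeomorphism you are studying. Second, chain transitivity supplies pseudo-orbits, but promoting an alternating pseudo-orbit to a genuine orbit requires shadowing, which is precisely what fails in a non-hyperbolic homoclinic class---this is the ``no Markov partition'' obstruction you name but do not overcome; mere transitivity gives visits to neighborhoods of $P_1$ and $P_2$ with no control of the \emph{proportion} of time spent there, and Birkhoff-average oscillation needs sojourn stretches comparable to the total elapsed time. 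Third, even granting an oscillating orbit through a prescribed open set $U$, you never show the point belongs to $H(P)$; residual historic behavior is about a residual subset of the class itself, and membership in $H(P)$ is a separate, nontrivial step. There is also an outright error upstream: saddles $P_1,P_2$ of indices $i$ and $i+1$ can never be homoclinically related, since $\dim W^u(P_2)+\dim W^s(P_1)=d-1<d$ makes a transverse intersection in one of the two directions impossible; the heterodimensional cycle only provides non-transverse or one-sided connections. (The two-measures setup is in any case unnecessary: two distinct periodic orbits of the same index would separate averages just as well.)

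The paper fills exactly these holes with the blender machinery. Generically (Bonatti--D\'iaz, \cite[Thm.~6.4]{BD12}, quoted as Theorem~\ref{gen}), index variation in $H(P)$ produces a dynamical $cu$-blender $\Gamma$ with a \emph{distal} periodic point homoclinically related to $P$ (Lemma~\ref{lem-prototipical}, Remark~\ref{RemProt}); the generic dichotomy $H(P)=H(Q)$ or $H(P)\cap H(Q)=\emptyset$ from~\cite{ABCDW07} handles the case $\mathrm{ind}(Q)<\mathrm{ind}(P)$. Then in Theorem~\ref{thm-main-2} the strictly $f$-invariant family of discs $\mathcal{D}$ is the substitute for specification/shadowing: with $\varphi=1$ near $P$ and $\varphi=0$ on $\mathcal{D}$, a small disc $V\subset W^u(P)$ through a transverse homoclinic point in $B_\varepsilon(x)$ first lingers near $P$ (average $>7/8$), then by the Inclination Lemma and distality enters $\mathcal{D}$, and strict invariance ($D_n\subset f(D_{n-1})$) keeps a sub-disc inside $\mathcal{D}$ for an arbitrarily long time $m_3$ (average $<1/8$)---this is the quantitative time control your pseudo-orbits lack. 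Finally, the blender's superposition property forces $W^s(P)\pitchfork f^{n_2}(V'')\neq\emptyset$ for the strip $f^{n_2}(V'')\subset W^u(P)$ foliated by discs of $\mathcal{D}$, so the oscillating point lies in $W^u(P)\pitchfork W^s(P)\subset H(P)$---exactly the membership step absent from your argument. In short: your reduction matches the paper's, but the mechanism producing oscillating points \emph{inside} the class is missing, and what you propose in its place (connecting lemma, chain-transitive pseudo-orbits) does not work for a fixed $f$.
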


We remind that a property holds \emph{$C^1$-generically}
if there is a
residual subset of the space of all $C^1$-diffeomorphisms satisfying this property.
Also observe that the above definition of residual historic behavior is somewhat stronger than
the initial one because the continuous function
$\varphi$ is independent of the point $x$.

Let us continue with two questions related to Theorem \ref{thm-main}.
Entropy and dimension of sets with historic behavior
was studied in shifts with the specification property in \cite{BS00} (see also \cite{BLV14, BLV18}
 where the historic set is called an irregular set).
In particular,
the set of initial points in a uniformly hyperbolic set
whose orbits have historic behavior
carries full topological entropy and full Hausdorff dimension. So
it is natural to ask whether similar properties hold for
homoclinic classes in our setting.

\begin{question}
Does the historic set of a generic homoclinic class have full topological entropy
or full Hausdorff dimension?
\end{question}

The other question is a version of Takens' last problem \cite{T08}:
\textit{are there persistent classes of smooth 
dynamical systems for which 
the set of initial states 
which give rise to orbits 
with historic
behavior has positive Lebesgue measure?
}
An affirmative answer to this problem is already given in dimension 2  as follows. 
We say that  
an open set $D$ is a 
 \emph{contracting non-trivial wandering domain} for a diffeomorphism $f$ 
if
\begin{enumerate}[label={\alph*})]
\item $f^{i}(D)\cap f^{j}(D)=\emptyset$ if $i\neq j$,
\item the union of $\omega(x)$ for any $x\in D$ is not equal to a single periodic orbit, 
\item the diameter of $f^{i}(D)$ converges to zero if $i\to \infty$.
\end{enumerate}
As mentioned previously, it is proven in \cite{KS17} 
that 
every two-dimensional diffeomorphism in any Newhouse open set
(i.e., 
an open set of $C^2$ diffeomorphisms which have 
persistent tangencies associated with some basic sets, 
see \cite[\S 6.1]{PT-book})
is contained in the $C^{r}$-closure  ($2\leq r<\infty$) 
of diffeomorphisms having contracting non-trivial wandering domains,  
where the orbit of any point has historic behavior. 
According to a conjecture of Palis, see in \cite[\S 5.5]{BDV05},  
the cause for lack of hyperbolicity besides homoclinic tangencies 
is the existence of heterodimensional cycles.
Thus the question is as follows.
\begin{question}
Does there exist a persistent class
near every diffeomorphism having a heterodimensional cycle 
in which
any diffeomorphism has a non-trivial wandering domain $D$ such that the orbit of every point in $D$ has historic behavior? 
\end{question}
\noindent
Note that \cite{KNS17} gives a condition ensuring that
3-dimensional diffeomorphisms with heterodimensional cycles
are $C^{1}$-approximated by diffeomorphisms having contracting non-trivial wandering domains
along some attracting invariant circles. 
However, these domains contain no points whose orbits have historic behavior
due to the Denjoy-like construction used in \cite{KNS17}.
\smallskip 

We close the introduction by explaining the structure of the paper
and how to obtain Theorem \ref{thm-main} proven in Section~\ref{S4}. It will be a consequence
of our Theorem~\ref{thm-main-2} in Section~\ref{S3} and a key
result of~\cite{BD12} on the generation of special type of
hyperbolic sets, called \emph{blenders}. These sets appear inside homoclinic
classes with index variation (containing periodic points of different 
indices) for generic diffeomorphisms, as is explained in  Section~\ref{S4}.  Thus in Section
\ref{S2} we recall the definitions of blenders according to the
various levels (Definitions \ref{def:blender},
\ref{def:dynamical}), all of which can be
realized by using the covering property (Definition \ref{d2.4}).
 Then in Section \ref{S3} we give a key result (Theorem \ref{thm-main-2}) of this paper, which
contains the essential ingredients for obtaining residual historic behavior associated with
dynamical blenders.

\section{Blenders}\label{S2}
Blenders (with codimension one) were initially defined by Bonnatti and D\'{\i}az
in ~\cite{BD96} and were used to construct
robustly transitive non-hyperbolic diffeomorphisms  (see also \cite{BDV05, BD12}).
Blenders with larger codimension
were studied in~\cite{NP12,BKR14,BR17}. We will use the
following definitions coming from \cite{BBD16}.

\begin{definition}[$cu$-blender] \label{def:blender} 
Let $f$ be a diffeomorphism
of a manifold $M$. A compact set $\Gamma \subset M$ is
a \emph{$cu$-blender of codimension $c>0$} if
\begin{enumerate}
\item $\Gamma$ is a transitive maximal $f$-invariant hyperbolic
set in a relatively compact  open set $U$,
\[
    \Gamma = \bigcap_{n\in\mathbb{Z}} f^n(\overline{U}) \quad
    \text{and} \quad
    T_\Gamma M= 
    E^{s} \oplus E^c \oplus E^{uu},
\]
where $E^u=E^{c}\oplus E^{uu}$ is the unstable bundle,
$uu=\dim E^{uu}\geq 1$ and $c=\dim E^{c}\geq 1$, 
\item  \label{def:blender2}
there exists an
open set
 $\mathcal{D}$ of $C^1$-embeddings of
$uu$-dimensional discs,
and
\item there exists a $C^1$-neighborhood $\mathscr{U}$ of $f$
such that, for all $D\in \mathcal{D}$ and $g\in \mathscr{U}$,
\[
  W^s_{\rm loc}(\Gamma_g) \cap D \not = \emptyset,
\]
where $\Gamma_g$ is the continuation of $\Gamma$ for $g$.
\end{enumerate}
\end{definition}
\noindent 
See in \cite[Sec.\!~3.1]{BBD16} for 
the topology of the set of such embeddings in \ref{def:blender2}.
The set
$\mathcal{D}$ is called the \emph{superposition region} of the
blender.
A \emph{$cs$-blender of codimension $c$} of $f$ is a $cu$-blender of
codimension $c$ for $f^{-1}$.
The term of \enquote{codimension} above comes from the bifurcation theory as it is explained
in~\cite{BR19}.

%
%

\subsection{Dynamical blender}
In~\cite{BBD16}, the authors introduce the notion of a strictly
invariant family of discs as a criterion to obtain a blender,
which we explain in what follows.

Let $\mathscr{D}(M)$ be the set of $uu$-dimensional (closed) discs
$C^1$-embedded into $M$  and endowed with the $C^1$-topology.
\begin{definition}[$f$-invariant family, see Def.\! 3.7 in ~\cite{BBD16}]
A family $\mathcal{D}$ of discs in $\mathscr{D}(M)$ is said to be \emph{strictly
$f$-invariant} if there exists a neighborhood $\mathscr{N}$ of
$\mathcal{D}$ in $\mathscr{D}(M)$ such that for every disc $D_0\in
\mathscr{N}$ there is a disc $D_1 \in \mathcal{D}$ with
$D_1\subset f(D_0)$.
\end{definition}
Suppose that $\Gamma$ is a transitive hyperbolic set
having a partially hyperbolic splitting $T_\Gamma M= E^s\oplus
E^c\oplus E^{uu}$ with $E^u=E^c\oplus E^{uu}$ and $uu=\dim
E^{uu}$. Moreover, assume that there exists a strictly $f$-invariant
family of $uu$-dimensional $C^1$-discs tangent to an invariant
expanding cone-field $\mathcal{C}^{uu}$ around $E^{uu}$ (see the precise definition in \cite[Sec.\!~ 3.2]{BBD16}). Then
$\Gamma$ is actually a $cu$-blender of codimension $c=\dim E^c>0$,
see~\cite[Lem.\!~3.14]{BBD16}. Motivated by this result, they introduced the following class of blenders:

\begin{definition}[Dynamical $cu$-blender] \label{def:dynamical}
Let $f$ be a diffeomorphism of a manifold $M$. A compact set
$\Gamma \subset M$ is a \emph{dynamical $cu$-blender of
codimension $c>0$} if
\begin{enumerate}
\item $\Gamma$ is a transitive maximal $f$-invariant hyperbolic
set in a relatively compact  open set $U$,
\[
    \Gamma = \bigcap_{n\in\mathbb{Z}} f^n(\overline{U}) \quad
    \text{and} \quad
    \text{$T_\Gamma M= 
    E^{s} \oplus E^c \oplus E^{uu}$}
\]
where $E^u=E^{c}\oplus E^{uu}$ is the unstable bundle,
$uu=\dim E^{uu}\geq 1$ and $c=\dim E^{c}\geq 1$, 
\item there is a strictly $Df$-invariant cone-field $\mathcal{C}^{uu}$
around $E^{uu}$ which can be extended to $\overline{U}$,
\item  there is a strictly $f$-invariant family $\mathcal{D}$
of $uu$-dimensional discs in $\mathscr{D}(M)$ such that
every disc $D$ in a neighborhood of $\mathcal{D}$ is contained in
$U$ and tangent to $\mathcal{C}^{uu}$, i.e.,
\[
  \text{$D\subset U$ \ \ and \ \
  $T_xD \subset \mathcal{C}^{uu}(x)$ \ \ for all $x\in D$}.
\]
\end{enumerate}
\end{definition}
A \emph{dynamical $cs$-blender of codimension $c$} for $f$ is a dynamical
$cu$-blender of codimension $c$ for $f^{-1}$.

\subsection{Covering property} Although
Definition~\ref{def:dynamical} is very useful, the difficulty is
 in showing the existence of a strictly invariant family of
discs. The following covering criterion
helps us to conclude when
a hyperbolic set is a dynamical blender. This criterion appeared 
in~\cite{BD96, BDV05} in the case of codimension $c=1$ and it was
extended for large codimension in~\cite{NP12,BKR14,BR17,ACW}.

Let $\Gamma$ be a horseshoe, i.e., a locally maximal invariant
hyperbolic set of a diffeomorphism $f:M\to M$ conjugated with a
full shift.  Assume that $f$ restricted to $\Gamma$ has a partially
hyperbolic splitting $T_\Gamma M= E^s\oplus E^{cu}\oplus E^{uu}$
such that
$E^{u}=E^{cu}\oplus E^{uu}$ is the unstable bundle,
$c=\dim E^{cu}\geq 1$, $uu=\dim E^{uu}\geq 1$,
and there are positive constants $\hat{\gamma}, \hat{\nu}\in \mathbb{R}$ such that
\[
  \|Df|_{E^s}\| < 1 <\|Df|_{E^{cu}}\|\leq
   \hat{\gamma}^{-1}<\hat{\nu}^{-1}<m(Df|_{E^{uu}}).
\]
Here $\|F\|$ stands for the operator norm 
and $m(F):=\|F^{-1}\|^{-1}$ for a given invertible linear map $F$. 
Moreover, we also assume that $\Gamma$ is contained in a chart of
$M$
which is in local coordinates
$\mathbb{R}^d=\mathbb{R}^{s+uu}\times \mathbb{R}^{c}$. 

For what
follows, let us define the sets of vertical and horizontal rectangles.
A \emph{vertical} rectangle $V$ on $[-1,1]^{s+uu}$ is a set of the form
$$V=\bigcup_{y\in[-1,1]^{uu}} I_y\times \{y\},$$
where for each $y$, $I_y$ is a product of $\{uu\}$-tuple closed intervals which depend
$C^1$-continuously on $y$. A \emph{horizontal} rectangle $H$ is defined in an analogous manner,
the union now being taken over $\{x\}\times I_x, \ x\in[-1,1]^{s} $.

The covering
property consists of the conditions we describe next. See Figure \ref{fig1}.
\begin{definition}[Covering property] \label{d2.4}
There are open sets $B, B_1,\dots,B_k \subset (-1,1)^c$,
horizontal and vertical rectangles $H_1,\dots,H_k$ and $V_1,\dots,
V_k$ respectively in $[-1,1]^{s+uu}$,  satisfying the following:
\begin{enumerate}
\item \label{1} $\Gamma$ is the maximal invariant set in $V\times [-1,1]^c$
where $V=V_1\cup \dots \cup V_k$.
\item \label{2} $H_i \times \overline{B_i} \subset f^{-1}(V_i\times B)$ and
$f(H_i\times B_i)$ is a vertical rectangle in $V_i\times B$.
The vertical rectangle here is defined similarly as above having the expression 
$$\bigcup_{y\in[-1,1]^{uu}} I_y\times \{y\} \times J_y.$$
\item \label{3} $\overline{B}\subset B_1\cup \dots \cup B_k$ with Lebesgue
number $L>0$.
\item \label{4}
The local strong unstable manifolds are $C^1$-embedded graphs in $[-1,1]^d$
of the form
$$D(y)=(h_s(y) ,y, h_c(y)), \ y\in[-1,1]^{uu},$$
and having the Lipschitz constant $\hat C$ of $h_c$ satisfying $\hat C < L$.
\end{enumerate}
\end{definition}
\begin{figure}[hbtp]
\centering
\includegraphics[width=125mm]{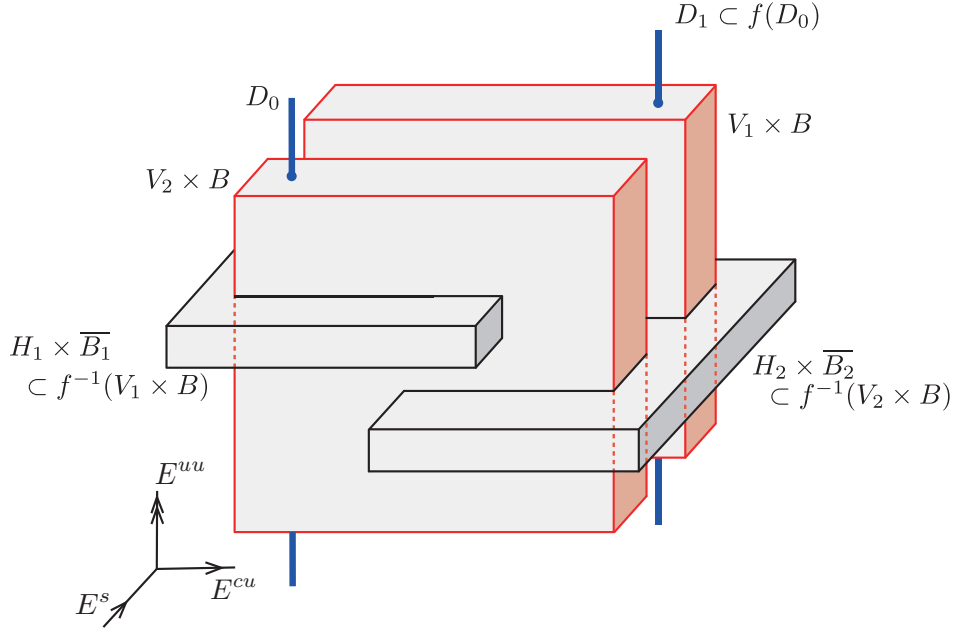}
\caption{Covering property as $k=2$}
\label{fig1}
\end{figure}

Notice that condition~\ref{4} relates the variation of the cone-field $C^{uu}$ to
the Lebesgue number of the cover in condition~\ref{3}.
Moreover,~\ref{4} always
holds when $\Gamma$ is a standard affine horseshoe
(see~\cite[Def.\!~7.4]{ACW}) or $f$ is a one-step skew-product
(see~\cite[Def.\!~5.1]{BR17}),  since in these examples $\hat{C}=0$.
In both cases the dynamics of $f$ is associated with an iterated
function system (IFS for short) generated by contracting maps
$\phi_1,\dots,\phi_k$ of $[-1,1]^c$,  and then
the fourth condition above can be summarized to asking that
\begin{equation} \label{cover}
   \overline{B} \subset \phi_1(B) \cup \dots \cup \phi_k(B).
\end{equation}


\begin{rem} \label{rem:covering-dynamical} It follows
from~\cite[proof of Thm.\!~A.2]{BR17} that a horseshoe satisfying the
covering property is a dynamical $cu$-blender. 
Namely, the set
$\mathcal{D}$ of strictly $f$-invariant  discs is given by
the set of almost-vertical $C^1$-embedded discs in $V\times B$. These \textit{almost-vertical discs} are defined
to be close
to the so-called constant vertical
discs, which are $uu$-dimensional discs projecting
into a single point  on $\mathbb{R}^c$. For more details and the precise definitions see~\cite{BR17, BR19}.
\end{rem}
\noindent



\subsubsection{Prototypical blender-horseshoe} \label{prototypical}
Any known example of a horseshoe
which is a blender
satisfies the
covering property.
This is the case of the
important model used in many
articles called the
\emph{prototypical blender-horseshoe} (see ~\cite[Sec.\!~5.1]{BD12} and also~\cite{BD96,BDV05,BD08,BDK12}).

In this model, $f$ is locally
defined as a one-step skew-product of the form $f=F\ltimes
(g_1,g_2)$ on $\mathbb{R}^{d}=\mathbb{R}^{s+uu}\times \mathbb{R}$.
Here, $F$ is a map having a horseshoe, $\Lambda$,  in $[-1,1]^{s+uu}$
conjugated with a full shift on two symbols, while
\[
g_1(x)=\lambda x,
\qquad \text{and} \qquad
g_2(x)=\lambda x -\mu
\]
for $x\in \mathbb{R}$
with
$1<\lambda<2$ and $0<\mu<1$.
Namely,
\begin{equation} \label{eq:skew-product}
  f(\omega,x)= \begin{cases}
  (F(\omega), g_1(x)) & \text{if $\omega \in H_1$} \\
  (F(\omega), g_2(x)) & \text{if $\omega \in H_2$}
  \end{cases}
\end{equation}
where $H_1$ and $H_2$ are the horizontal rectangles in $[-1,1]^{s+uu}$ 
containing $\Lambda$.
The associated contracting IFS is given by the inverse maps of the expanding $g_1$ and $g_2$.
Then, for any $\varepsilon>0$ small enough, the open set
$B=(\varepsilon, (\mu+1)/\lambda-\varepsilon)$ satisfies the
covering property \eqref{cover}. See Figure \ref{fig2}. 
\begin{figure}[hbt]
\centering
\includegraphics[width=65mm]{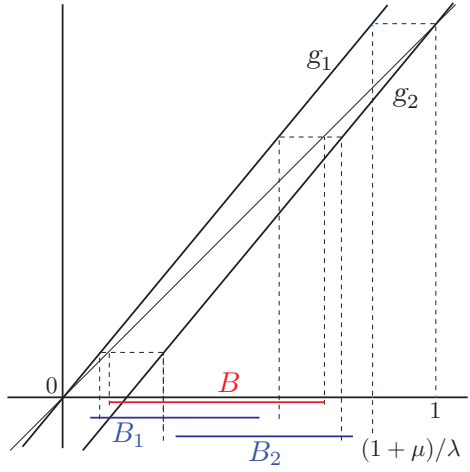}
\caption{Covering property for $(g_1,g_2)$}
\label{fig2}
\end{figure}
Hence, according to
Remark~\ref{rem:covering-dynamical}, the maximal invariant set
$\Gamma$ of $f$ in $H\times [-1,1]$, where $H=H_1\cup H_2$ 
is a dynamical $cu$-blender of
codimension $c=1$.

In what follows we will need to consider dynamical blenders having
an extra assumption. For this reason we introduce the following
definition:

\begin{definition} A periodic point $P$ in a dynamical $cu$-blender $\Gamma$ of a
diffeomorphism $f$ is said to be \emph{distal} if the orbit of $P$
is far from the strictly $f$-invariant family of disc and whose
unstable manifold contains a disc in this family.
\end{definition}

The following lemma shows that prototypical blender-horseshoes
have a distal periodic point. Moreover, since this property is
open, it also holds by any nearby dynamical blender.

\begin{lem}\label{lem-prototipical} The dynamical $cu$-blender $\Gamma$ of the map
$f$ in~\eqref{eq:skew-product} has a distal point.
\end{lem}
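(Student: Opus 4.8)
The plan is to exhibit the fixed point $P=(p_1,0)$ as a distal point, where $p_1$ is the fixed point of $F$ associated with the constant itinerary $\overline{1}$ (so that $p_1\in H_1\cap V_1$) and $0$ is the fixed point of $g_1$. Since $p_1\in H_1$ and $g_1(0)=0$, one has $f(P)=(F(p_1),g_1(0))=(p_1,0)=P$, so $P$ is a fixed point of $f$ whose orbit is the single point $P\in\Gamma$. Note that the other obvious fixed point $(p_2,\mu/(\lambda-1))$ has its central coordinate inside $B$ and is therefore \emph{not} a candidate, which is precisely why the choice $P=(p_1,0)$ is forced.

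First I would verify that the orbit of $P$ is far from $\mathcal{D}$. By Remark~\ref{rem:covering-dynamical}, every disc of $\mathcal{D}$ lies in $V\times B$, and hence its $\mathbb{R}^{c}=\mathbb{R}$-coordinate belongs to $B=(\varepsilon,(\mu+1)/\lambda-\varepsilon)$. As the $x$-coordinate of $P$ equals $0$, the distance from $P$ to any such disc is at least $\varepsilon>0$; thus the orbit of $P$ stays uniformly away from the family, as required.

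The core step is to show that $W^u(P)$ contains a disc of $\mathcal{D}$. Here I would exploit the skew-product form \eqref{eq:skew-product}: since $Df$ is block diagonal with fiber factor $g_i'\equiv\lambda$ independent of $x$, one computes $f^{-n}(\omega,x)=(F^{-n}(\omega),X_n)$, where $X_n$ is obtained by composing the contractions $g_1^{-1},g_2^{-1}$ along the backward itinerary of $\omega$. For $\omega\in W^u_{\mathrm{base}}(p_1)$ this backward itinerary is eventually constant equal to $1$, so $X_n\to0$ for \emph{every} $x$, yielding the product description $W^u(P)=W^u_{\mathrm{base}}(p_1)\times\mathbb{R}$ as an immersed manifold, where $W^u_{\mathrm{base}}(p_1)$ denotes the unstable manifold of $p_1$ for $F$. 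In the prototypical horseshoe the unstable manifold of $p_1$ contains a full vertical $uu$-disc $\gamma_0\subset V_1\subset V$, i.e.\ a graph over $[-1,1]^{uu}$ (in the affine model this is already the local unstable manifold of $p_1$). Fixing any $x_0\in B$, the disc $\gamma_0\times\{x_0\}\subset W^u(P)$ is then a constant vertical disc contained in $V\times B$, and hence a member of the family $\mathcal{D}$ of almost-vertical discs described in Remark~\ref{rem:covering-dynamical}. This supplies the required disc and completes the verification that $P$ is distal.

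The main obstacle I anticipate lies in this last identification: making precise that the explicit leaf $\gamma_0\times\{x_0\}$ genuinely qualifies as an element of the strictly $f$-invariant family $\mathcal{D}$ (rather than merely being approximated by its elements), and that it spans the superposition region $V\times B$ as an almost-vertical graph. This amounts to matching the concrete geometry of $W^u(p_1,0)$ with the abstract notion of almost-vertical disc from \cite{BR17,BR19}; once the constant vertical discs are known to belong to $\mathcal{D}$, the two conditions above close the argument, and openness of the distality condition then transfers it to nearby dynamical blenders.
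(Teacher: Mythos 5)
Your proposal is correct and takes essentially the same route as the paper's proof: you choose the same fixed point $P=(p,0)$ with $p$ the fixed point of $F$ in $H_1$, check that its fiber coordinate $0$ keeps the orbit at distance at least $\varepsilon$ from $V\times B$, and verify that $W^u(P)$ contains a constant vertical disc $\gamma_0\times\{x_0\}\subset V\times B$ belonging to $\mathcal{D}$ --- your explicit skew-product computation of $W^u(P)=W^u_{\mathrm{base}}(p)\times\mathbb{R}$ merely fleshes out what the paper asserts directly via Remark~\ref{rem:covering-dynamical}. One immaterial slip: in the parameter range where the covering property \eqref{cover} actually holds (essentially $\mu>\lambda-1$), the second fixed point's fiber coordinate $\mu/(\lambda-1)$ lies to the right of $B$ (indeed outside $[-1,1]$), not inside $B$ as your motivating aside claims, but this plays no role in your argument.
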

\begin{proof}
Let $\mathcal{D}$ be the strictly $f$-invariant family
 of $uu$-dimensional discs of $\Gamma$. According to
Remark~\ref{rem:covering-dynamical}, this corresponds with the
region $V\times B$ in $[-1,1]^d$ where $V=F(H)\cap [-1,1]^{s+uu}$.
Let $p\in \Lambda$ be the fixed point of $F$ in $H_1$. Then
$P=(p,0)$ is a fixed point of $f$ in $\Gamma$. Observe that $P$
does not belong to $V\times B$. Moreover, the intersection of
$W^{u}(P)$ and $V\times B$ contains a disc in the strictly
$f$-invariant family $\mathcal{D}$ of $uu$-dimensional discs.
Thus, $P$ is a distal point of $\Gamma$.
\end{proof}

\section{Historic behavior in a homoclinic class with a blender} \label{S3}

The following result is the main ingredient to get
Theorem~\ref{thm-main}. 
We say a periodic point $Q$ is \emph{homoclinically related} to a periodic point $P$
if $W^{u}(Q)\pitchfork W^{s}(P)\neq \emptyset$ and $W^{u}(P)\pitchfork W^{s}(Q)\neq \emptyset$.
\begin{thm} \label{thm-main-2}
Consider a $C^1$-diffeomorphism $f$ of $M$ having
a homoclinic class $H(P)$, which contains a dynamical $cu$-blender
$\Gamma$ with a distal periodic point in $\Gamma$
homoclinically related to $P$.
Then $H(P)$ has residual historic behavior.
\end{thm}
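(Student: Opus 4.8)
The plan is to construct a single continuous observable $\varphi$ and a residual set $R \subset H(P)$ such that every forward orbit starting in $R$ oscillates between two distinct Birkhoff limits, so that the average $\frac1n\sum_{i=0}^{n-1}\varphi(f^i(x))$ fails to converge. The mechanism exploits two dynamically distinct ``regions'' that any orbit can be forced to visit: the distal periodic point $P_0 \in \Gamma$ (which, being distal, keeps its orbit far from the invariant family of discs, hence far from the superposition region $V\times B$), and the superposition region itself, where the blender forces strong-unstable discs to intersect the local stable manifold. First I would fix disjoint neighborhoods $U_0$ of the orbit of the distal point and $U_1$ of a point deep inside the superposition region, and choose $\varphi$ to be $0$ near $O(P_0)$ and $1$ near the chosen point of the superposition region, extended continuously with values in $[0,1]$. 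The two target Birkhoff averages then differ (roughly $0$ versus $1$), giving the oscillation once we know an orbit can spend arbitrarily long stretches near each region alternately.

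The heart of the argument is to show that the set of points whose orbits exhibit this alternation is residual in $H(P)$. The standard way to do this is to write the historic set as a countable intersection of open dense subsets of $H(P)$. For each pair of rational thresholds $a<b$ lying strictly between the two target averages and each $N\in\mathbb N$, I would define
\[
  G_{a,b,N}=\Bigl\{\, x\in H(P) : \exists\, m,n>N \text{ with } \tfrac1m\textstyle\sum_{i=0}^{m-1}\varphi(f^i(x))<a \text{ and } \tfrac1n\sum_{i=0}^{n-1}\varphi(f^i(x))>b \,\Bigr\}.
\]
Each $G_{a,b,N}$ is open because the finite Birkhoff sums depend continuously on $x$, and the historic set contains $\bigcap_{a<b,\,N} G_{a,b,N}$. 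The remaining task is density of each $G_{a,b,N}$ in $H(P)$, and this is exactly where the dynamical structure enters.

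To prove density I would use transitivity and the homoclinic relation together with the blender's covering property. Since $\Gamma$ is a transitive hyperbolic set homoclinically related to $P$, periodic orbits homoclinically related to $P$ are dense in $H(P)$, and their stable and unstable manifolds weave through the whole class; so near any given $x\in H(P)$ and any length scale I can find points whose orbit is shadowed first along a long segment near the orbit of the distal point $P_0$ (driving the average close to $0$) and then routed, via the blender, into the superposition region $V\times B$ for a long stretch (driving a later average close to $1$). The essential tool is that the covering property guarantees that almost-vertical discs are mapped onto discs meeting $W^s_{\mathrm{loc}}(\Gamma)$, so one can prescribe itineraries that keep the orbit in the superposition region for as long as desired; distality guarantees that the segment near $O(P_0)$ genuinely avoids the region where $\varphi\approx 1$. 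Combining a long ``$0$-phase'' with a long ``$1$-phase'' places the orbit in $G_{a,b,N}$, and since $x$ and the scale were arbitrary this yields density.

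The main obstacle I anticipate is the quantitative bookkeeping in the density step: producing, arbitrarily close to a prescribed point of $H(P)$, an honest orbit (not merely a pseudo-orbit) whose finite-time averages dip below $a$ and later rise above $b$ past any prescribed time $N$. This requires controlling the transition times between the two regions so that the ``bad'' transitional stretches, where $\varphi$ takes intermediate values, are negligible relative to the long $0$- and $1$-phases; here one must use that the excursion near $O(P_0)$ and the dwell time in $V\times B$ can each be taken enormously long while the travel time between them stays bounded (coming from a fixed heteroclinic connection guaranteed by the homoclinic relation and the blender). Making this shadowing rigorous---selecting itineraries in the symbolic coding of $\Gamma$, using the covering property to realize the superposition-region dwell, and invoking the $\lambda$-lemma to connect to the distal orbit---is the technical core, whereas the Baire-category packaging and the choice of $\varphi$ are comparatively routine.
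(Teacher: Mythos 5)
Your proposal follows essentially the same route as the paper's proof: the same two-region observable $\varphi$ separating the distal orbit from the superposition region, the same Baire packaging into open sets defined by oscillation of finite-time averages (your $G_{a,b,N}$ play the role of the paper's $\Delta(x,\varepsilon,N)$ and $R_N$), and the same density mechanism --- a transverse homoclinic point near $x$ whose orbit first lingers near the distal orbit, is then carried by the $\lambda$-lemma into the strictly invariant disc family, where strict invariance (a nested sequence $D_n \subset f(D_{n-1})$) produces the arbitrarily long dwell, and the blender property (every disc of the family meets $W^s_{\mathrm{loc}}(\Gamma)$, applied to a strip inside a forward image of $W^u(P)$) forces the constructed point to lie in $H(P)$ itself. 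The only real difference is presentational: the paper realizes the dwell entirely through the invariant disc family inside iterates of $W^u(P)$, never needing the symbolic coding or specification-style shadowing in $\Gamma$ that you flag as the technical core, so the quantitative bookkeeping reduces to the two elementary estimates $m_0/n_1 < 1/8$ and $(n_1+m_2)/n_2 < 1/8$.
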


\begin{proof}
Denote by $\mathcal{D}$ the strictly $f$-invariant family of discs
of the dynamical $cu$-blender $\Gamma$. See Figure \ref{fig3}. By abuse of notation, we
also denote by $\mathcal{D}$ the region in $M$ where the discs of
this family are embedded. Since the homoclinic classes of homoclinically
related saddle points coincide, relabeling if necessary, we will
denote by $P$ the periodic (distal) point of $f$ in $\Gamma$ 
whose distance from $\mathcal{D}$ is larger than some $r>0$. For
simplicity, we assume that $P$ is a fixed point of $f$.

Consider a continuous function $\varphi: M \to  [0,1]$
such that
$\varphi(x)=1$ for all $x$ in the open ball $B_r(P)$ of
radius $r$ centered at $P$ and $\varphi(x)=0$ if $x$ belongs
to $\mathcal{D}$.  Fix $\varepsilon>0$, $N\in\mathbb{N}$ and $x\in
H(P)$. 
We define  $\Delta(x,\varepsilon,N)$ as the set of  points
$y\in B_\varepsilon(x)$ which satisfies the following condition:
there are $n_1, n_2 \geq N$
such that
\begin{equation}\label{eq:1}
    \frac{1}{n_1}\sum_{i=0}^{n_1-1} \varphi(f^i(y))- \frac{1}{n_2}\sum_{i=0}^{n_2-1} \varphi(f^i(y)) >
    \frac{1}{2}.
\end{equation}
The first trivial observation is that $\Delta(x,\varepsilon,N)$ is
open. Moreover, it holds that
\begin{claim} \label{claim}
$\Delta(x,\varepsilon,N) \cap H(P) \not=\emptyset$.
\end{claim}

\begin{figure}[htbp]
\centering
\includegraphics[width=125mm]{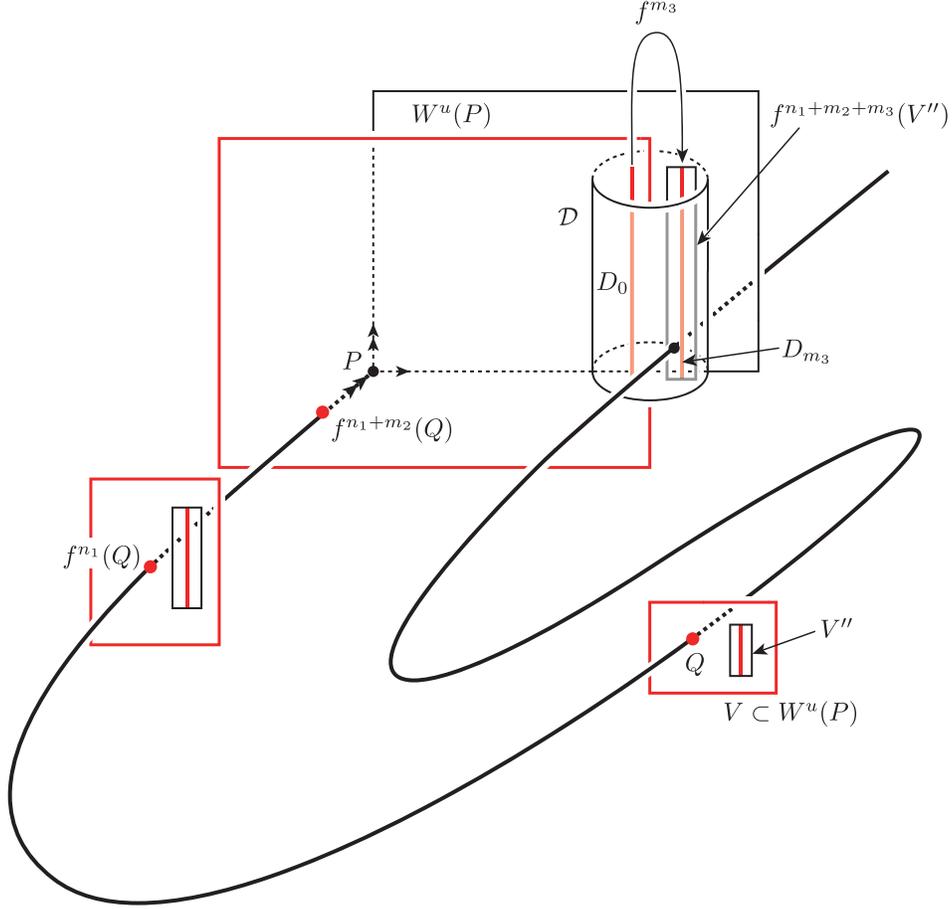}
  \caption{Sketch of the proof of Theorem~\ref{thm-main-2}.}
\label{fig3}
\end{figure}

Let us postpone the proof of this claim to first conclude the
theorem. Take $R_N$ as the union of $\Delta(x,\varepsilon,N)$ for
$\varepsilon>0$ and $x\in H(P)$. Clearly $R_N\cap H(P)$ is an open
and dense set in $H(P)$. Hence the set $R\cap H(P)$, where $R=\cap
R_N$, is a residual set in $H(P)$. Moreover, if $y\in R$ then for
every $N\in \mathbb{N}$, the forward orbit of $y$ has
$N$-conditional historic behavior, i.e., ~\eqref{eq:1} holds.
This implies that the forward orbit of $y$ has historic behavior,
concluding the theorem.

\begin{proof}[Proof of Claim~\ref{claim}]
Since $H(P)$ is a homoclinic class, there exists $Q\in
B_\varepsilon(x)$ belonging to $W^s(P)\pitchfork W^u(P)$. Since
$Q\in W^s(P)$,
there is $m_0\in\mathbb{N}$ such that $f^{m_0}(Q)\in B_r(P)$.
Moreover, assuming $r$ sufficiently small, we  have that
$f^{{m_0}+n}(Q)\in B_r(P)$ for all $n\geq 0$. Take $m_1\in
\mathbb{N}$ such that $n_1=m_0+m_1 \geq N$ and $m_0/n_1 < 1/8$.
Then
\[
     \frac{1}{n_1}\sum_{i=0}^{n_1-1} \varphi(f^i(Q))\geq
     \frac{1}{n_1}\sum_{i=m_{0}}^{n_1-1} \varphi(f^i(Q)) \geq
     \frac{n_1-m_{0}}{n_1}>\frac{7}{8}.
\]
By continuity, we have that
\begin{equation}\label{eq:n1}
\frac{1}{n_1}\sum_{i=0}^{n_1-1} \varphi(f^i(y))>\frac{7}{8} \quad
\text{for all $y\in B_\varepsilon(x)$ close enough to $Q$.}
\end{equation}

Now consider a small disc $V$  in $W^u(P) \cap B_\varepsilon(x)$
transverse to $W^s(P)$ at $Q$ such that $f^{n_1}(V) \subset
B_r(P)$. By the Inclination Lemma, the forward iteration of $V$
approaches $W^u(P)$. Since $P$ is distal, the unstable manifold of
$P$ contains a disc in the open family $\mathcal{D}$. Then we find
$m_2\in \mathbb{N}$ such that $f^{n_1+m_2}(V)$ also contains a
disc $D_0$ in $\mathcal{D}$. See Figure \ref{fig3}.
Moreover, as $\mathcal{D}$ is
strictly $f$-invariant, 
there is a sequence of discs $\{D_n\}_{n\in \mathbb{N}}$ in $\mathcal{D}$
such 
that $D_n \subset f(D_{n-1})$ for all $n\geq 1$. Take $m_3 \in
\mathbb{N}$ such that $(n_1+m_2)/n_2 < 1/8$, where
$n_2=n_1+m_2+m_3$.
Let $V'=f^{-m_3-m_2-n_1}(D_{m_3})\subset V$.
Then,
\[
\frac{1}{n_2}\sum_{i=0}^{n_2-1} \varphi(f^i(y))\leq
     \frac{1}{n_2}\sum_{i=0}^{n_1+m_2} \varphi(f^i(y)) \leq
     \frac{n_1+m_2}{n_2}<\frac{1}{8} \quad \text{for all $y\in  V'$}.
\]
By continuity, we have that
\begin{equation}\label{eq:n2}
\frac{1}{n_2}\sum_{i=0}^{n_2-1} \varphi(f^i(y)) < \frac{1}{8}
\quad \text{for all $y\in B_\varepsilon(x)$ close enough to $V'$}.
\end{equation}
Therefore, Equations~\eqref{eq:n1} and~\eqref{eq:n2}
imply~\eqref{eq:1} concluding that $\Delta(x,\varepsilon,N)$ is
not empty.

To conclude the proof we actually need to show that
$\Delta(x,\varepsilon,N)\cap H(P)\not=\emptyset$. To do this, we
will use that $\Gamma$ is a $cu$-blender. 
Take a small neighborhood $V''$ of $V'$ in 
$V\subset W^u(P)\cap B_\varepsilon(x)$ 
such that $f^{n_2}(V'')$ is a strip foliated by
discs in $\mathcal{D}$. Since $\Gamma$ is a $cu$-blender and $P\in
\Gamma$ is a fixed point, we get that $W^s(P)$ transversally
intersects 
$f^{n_2}(V'')$ at a point $Y$. In particular, $Y$ belongs to
$H(P)$ since $f^{n_2}(V'')\subset W^u(P)$. Thus, $f^{-n_2}(Y)\in
H(P) \cap V'' \subset H(P)\cap \Delta(x,\varepsilon,N)$.
This ends the proof of Claim \ref{claim}.
\end{proof}

Now the proof of Theorem \ref{thm-main-2} is complete.
\end{proof}

\section{Blenders in generic non-hyperbolic homoclinic classes}\label{S4}

We first give a result on historic behavior but with respect
to a \textit{fixed} homoclinic class. This will follow from the existence of blenders and
Theorem \ref{thm-main-2}. Thus, as a part of the proof we review the known arguments for obtaining blenders
in the $C^1$-generic context. Denote by $\mathrm{ind}(P)$ the stable index of a saddle
periodic point $P$ for $f$, and by $P_g$ the continuation of $P$ if $g$ is close to $f$.

\begin{thm} \label{thm1}
Let $f$ be a $C^1$-diffeomorphism of $M$ and 
fix a hyperbolic periodic point $P$ for $f$. 
Assume that the homoclinic class $H(P)$ contains a
hyperbolic saddle $Q$ with
$\mathrm{ind}(Q)=\mathrm{ind}(P)+1$.
Then there is an
open set $\mathcal{U}$ of $C^1$-diffeomorphisms with 
$f \in \overline{\mathcal{U}}$ such that $H(P_g)$ has residual historic
behavior for all $g\in \mathcal{U}$.
\end{thm}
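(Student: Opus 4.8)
The plan is to deduce Theorem~\ref{thm1} from Theorem~\ref{thm-main-2} by producing, after an arbitrarily small $C^1$-perturbation of $f$, a dynamical $cu$-blender that sits inside the homoclinic class and carries a distal periodic point homoclinically related to $P$. Since all three hypotheses of Theorem~\ref{thm-main-2} are $C^1$-robust (blenders are open by Definition~\ref{def:blender}, homoclinic relations persist under $C^1$-perturbations, and the distal condition is open as remarked after its definition), the set $\mathcal{U}$ of diffeomorphisms realizing this configuration is open; and because the perturbation can be taken arbitrarily small, $f\in\overline{\mathcal{U}}$. For every $g\in\mathcal{U}$, Theorem~\ref{thm-main-2} then yields residual historic behavior for $H(P_g)$.

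First I would produce the blender. Writing $i=\mathrm{ind}(P)$, the hypothesis gives a saddle $Q\in H(P)$ of index $i+1$. Here $W^u(P)\pitchfork W^s(Q)$ is a dimensionally generic (robust) intersection, whereas $W^s(P)\cap W^u(Q)$ has total dimension $d-1$ and is the \emph{fragile} intersection of a heterodimensional cycle. Using the standard $C^1$-generic connecting arguments recalled in \cite{BDV05} together with $Q\in H(P)$, I would arrange, by an arbitrarily small perturbation, a heterodimensional cycle associated with $P$ and $Q$ (more precisely, with saddles homoclinically related to them inside the class). Unfolding this cycle is exactly the situation treated in \cite{BD12}: the key result there provides, arbitrarily $C^1$-close, a diffeomorphism $g_0$ possessing a prototypical blender-horseshoe $\Gamma$ of codimension $c=1$, whose periodic points have index $i$ and whose splitting $T_\Gamma M=E^s\oplus E^c\oplus E^{uu}$ has $\dim E^c=1$ with the central direction associated to the index-changing direction of the cycle; note that $\dim W^u(Q)=d-i-1=uu$, so that $W^u(Q)$ supplies discs of the superposition region. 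By Remark~\ref{rem:covering-dynamical} this $\Gamma$ is a dynamical $cu$-blender.

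It then remains to verify the two extra hypotheses of Theorem~\ref{thm-main-2}. The distal periodic point is supplied by Lemma~\ref{lem-prototipical}, since $\Gamma$ is of prototypical type, and, being an open condition, the distal point survives for nearby diffeomorphisms as well. For the homoclinic relation I would track the invariant manifolds through the unfolding: the blender is produced in a neighborhood of the cycle where the iterates of $W^u(P)$ and $W^s(P)$ organize the horseshoe, so its index-$i$ periodic points -- in particular the distal one -- are homoclinically related to $P_{g_0}$; alternatively one invokes the $C^1$-generic fact that all index-$i$ saddles lying in $H(P)$ are homoclinically related to $P$. Either way, once the relation holds for $g_0$ it persists robustly, and the containment $\Gamma\subset H(P_{g_0})$ follows automatically, since homoclinically related points share the same homoclinic class and $\Gamma$, being a transitive hyperbolic set through the distal point, lies in its class.

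The main obstacle is precisely establishing this homoclinic relation between the distal point of $\Gamma$ and $P_g$: the blender and its distal point are robust by construction, but the relation to $P_g$ must be read off from the geometry of the cycle unfolding in \cite{BD12} (or extracted from $C^1$-generic homoclinic-class theory) rather than assumed. Once it is secured for a single $g_0$, its robustness delivers the open set $\mathcal{U}\ni g_0$, the containment $\Gamma_g\subset H(P_g)$ holds throughout $\mathcal{U}$, and the arbitrariness of the perturbation gives $f\in\overline{\mathcal{U}}$, reducing the statement to Theorem~\ref{thm-main-2}.
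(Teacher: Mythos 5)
Your proposal is correct and follows essentially the same route as the paper: perturb to create a heterodimensional cycle from $Q\in H(P)$, unfold it \`a la Bonatti--D\'{\i}az to obtain a prototypical $cu$-blender-horseshoe homoclinically related to $P$, take the distal point from Lemma~\ref{lem-prototipical}, and conclude via robustness and Theorem~\ref{thm-main-2}, taking $\mathcal{U}$ as a union of such open sets. The only real difference is bibliographic: the paper cites \cite{BDK12} (stabilization of cycles) for the robust heterodimensional cycle and \cite[Sect.~4]{BD08} for the blender construction that comes already homoclinically related to $P$---which is precisely where your acknowledged ``main obstacle'' (securing the homoclinic relation) is resolved---whereas you invoke connecting arguments from \cite{BDV05} and the blender generation of \cite{BD12}.
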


\begin{proof}
Since the homoclinic class of
$P$ must be non-trivial, according to
~\cite[Thm. 1]{BDK12}
we can
approximate $f$ by a diffeomorphism having a $C^1$-robust
heterodimensional cycle  associated with hyperbolic sets containing the
continuations of $P$ and $Q$ (see also ~\cite[Cor.~2.4]{BCDG13}). 

Recall that a diffeomorphism has a \emph{heterodimensional cycle} if
there exists a pair of transitive hyperbolic sets $\Lambda$ and
$\Gamma$ with different indices such that their invariant
manifolds meet cyclically. 
Since
$\mathrm{ind}(P)<
\mathrm{ind}({Q})$,
by construction of the cycle as done in 
\cite[Sect.4]{BD08}, 
there exists a prototypical
$cu$-blender-horseshoe $\Gamma$ homoclinically related with $P$.
The fact that the heterodimensional cycle persists
under perturbations comes from the robustness of the blender. In particular, from
Lemma~\ref{lem-prototipical}, there is a $C^1$-open subset
$\mathcal{V}$ arbitrarily close to $f$ such that any $g\in
\mathcal{V}$ has a dynamical $cu$-blender $\Gamma_g$ and a distal
periodic point in $\Gamma_g$ homoclinically related with $P_g$.
Thus, we are in the assumptions of Theorem~\ref{thm-main-2} and
consequently $H(P_g)$ has residual historic behavior. To conclude,
it is enough to take $\mathcal{U}$ as the union of all of these open
sets $\mathcal{V}$.
\end{proof}
\begin{rem} \label{Gen}
Observe that for a $C^1$-generic diffeomorphism the following facts are known. 
For every pair of hyperbolic periodic points the homoclinic classes either coincide
or are disjoint. And if a
homoclinic class $H(P)$ contains saddles of different indices, then it
also contains periodic points of all intermediate
indices~\cite[Thm. 1 and Lem. 2.1]{ABCDW07}. In particular, for a $C^1$-generic $f$ and a given homoclinic class $H(P)$,
we may assume there exists a point $Q$ with $\mathrm{ind}(Q)=\mathrm{ind}(P)+1$.
Thus, the previous Theorem~\ref{thm1} can be applied in this context.
\end{rem}
In the above theorem the homoclinic class is fixed, but observe that the set of diffeomorphisms obtained is open. However, we
would like to show $C^1$-generic historic behavior for \emph{any} homoclinic
class having index variation (containing saddles of different indices). 
To this end, we present the
following result due to Bonatti and D\'iaz \cite{BD12} on
generation of blenders inside any homoclinic class with certain
index variation.

\begin{thm}[{\cite[Thm.~6.4]{BD12}}] \label{gen}
There is a residual subset $\mathcal{R}$ of $C^1$-diffeomorphisms
$f$ of $M$ such that for every homoclinic class $H(P)$ containing
a hyperbolic saddle $Q$ with $\mathrm{ind}(Q)>\mathrm{ind}(P)$,
there is a transitive hyperbolic set $\Lambda$ containing $P$ and
a $cu$-blender $\Gamma$.
\end{thm}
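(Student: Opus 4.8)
The plan is to combine $C^1$-generic structural results on homoclinic classes with an explicit mechanism that manufactures a blender out of index variation, and then to upgrade the resulting density statement into a residual one by exploiting the robustness of blenders together with a Baire-category argument.

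First I would fix a suitable residual set of \emph{good} diffeomorphisms on which the standard generic dictionary is available: Kupka--Smale, periodic points dense in the chain recurrent set, each homoclinic class coinciding with the chain recurrence class of its periodic orbits, homoclinic classes of two periodic orbits being either equal or disjoint, and the intermediate-index property of~\cite{ABCDW07}. On this set, whenever $H(P)$ contains a saddle of index larger than $\mathrm{ind}(P)$, it also contains saddles of every intermediate index; in particular it contains periodic points of two consecutive indices $i$ and $i+1$, and we may replace $P,Q$ by such a pair.

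Second I would produce a blender inside the class by a local construction. Since the two consecutive-index saddles lie in the same homoclinic (hence chain recurrence) class, the $C^1$-connecting lemma machinery (see~\cite{BDV05}) allows one to $C^1$-perturb $f$ so as to realize a heterodimensional cycle associated with hyperbolic sets containing the continuations of $P$ and $Q$. Unfolding such a cycle by a further $C^1$-perturbation yields a prototypical $cu$-blender-horseshoe $\Gamma$ homoclinically related to $P$, exactly as in the construction of~\cite[Sect.~4]{BD08} and the model of Section~\ref{S2}; simultaneously one obtains a transitive hyperbolic set $\Lambda$ containing $P$ whose invariant manifolds interact with $\Gamma$. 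This shows that the property in the statement holds on a \emph{dense} subset of the diffeomorphisms whose class has index variation.

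Third I would turn density into residuality. A blender, once created, is $C^1$-robust, and its defining data---the chart, the horizontal and vertical rectangles, the superposition region, and the periodic orbits involved---can be encoded by finitely many combinatorial and rational parameters. For each such choice of data, the set of diffeomorphisms carrying a blender $\Gamma$ and a transitive hyperbolic set $\Lambda\ni P$ realizing it is \emph{open}. Enumerating the data over a countable family and taking the union of the corresponding open sets yields an open set that, by the previous step, is dense in the open set of diffeomorphisms possessing an index-variation homoclinic class; intersecting with the good residual set above gives the desired $\mathcal{R}$. The main obstacle is not any single perturbation but the \emph{uniformity} required to make the conclusion hold for \emph{every} index-variation class of a \emph{single} generic $f$ at once: the cycle-to-blender unfolding and the connecting lemma are delicate but standard, whereas the real work is the bookkeeping---using semicontinuity of homoclinic classes, the detection of index variation through periodic data, and the identification of homoclinic and chain recurrence classes---that organizes the countable exhaustion so that the property becomes residual rather than merely generic for one prescribed class, as in Theorem~\ref{thm1}.
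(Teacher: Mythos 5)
Your proposal matches the paper's own treatment, which is exactly the two-step sketch you give: the fixed-class case is handled as in Theorem~\ref{thm1} (creating a heterodimensional cycle between the consecutive-index saddles, citing \cite{BDK12} for its stabilization, and unfolding it into a prototypical $cu$-blender-horseshoe homoclinically related to $P$ as in \cite[Sect.~4]{BD08}), and the passage to all classes of a single generic $f$ uses the generic dictionary of Remark~\ref{Gen} together with the same standard Baire/semicontinuity bookkeeping you describe. Your invocation of the connecting-lemma machinery rather than the citation to \cite{BDK12} for producing the cycle is a difference of reference, not of substance.
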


This theorem follows from standard genericity arguments by
first proving the statement for a fixed homoclinic class
$H(P)$. On the other hand, this is done by means of similar reasoning as in the proof of Theorem~\ref{thm1}
and Remark~\ref{Gen}.
We also would like to emphasize the following: 

\begin{rem} \label{RemProt}
The $cu$-blenders constructed in Theorem~\ref{gen} 
come from the perturbations of prototypical
$cu$-blender horseshoes in the sense of Section~\ref{prototypical}, and in particular are dynamical blenders.
\end{rem}

Now we prove our main Theorem.

\begin{proof}[Proof of Theorem~\ref{thm-main}]
Consider the residual set $\mathcal{R}$ given in
Theorem~\ref{gen}. We can assume that for any $f\in \mathcal{R}$
and every pair of hyperbolic periodic points $P$ and $Q$ of $f$
either $H(P)=H(Q)$ or $H(P)\cap H(Q)=\emptyset$
(see~\cite[Lem. 2.1]{ABCDW07}). Now, fix $f\in \mathcal{R}$ and let $P$ be a
saddle periodic point of $f$ whose homoclinic class contains a
saddle $Q$ of different stable index.

If $\mathrm{ind}(Q)>\mathrm{ind}(P)$ then by Theorem~\ref{gen}
we get 
a $cu$-blender
$\Gamma$ in $H(P)$,
whose saddles are homoclinically related with $P$. By Remark~\ref{RemProt}, 
$\Gamma$ is a dynamical $cu$-blender and from
Lemma~\ref{lem-prototipical} has a distal periodic point related
with $P$. Thus, we are in the assumptions of
Theorem~\ref{thm-main-2} and consequently $H(P)$ has residual
historic behavior.

Next we suppose that 
$\mathrm{ind}(Q)<\mathrm{ind}(P)$. 
Since $H(P)=H(Q)$, 
$H(Q)$ contains points of different indices and 
similarly as above we
can apply Theorem~\ref{gen} for this homoclinic class. 
Hence, we
get that $H(Q)$ has residual historic behavior. As $H(Q)=H(P)$ we
also get the same conclusion for $H(P)$ and conclude the proof.
\end{proof}

%




\end{document}